\newtheorem{thm}{Theorem}[section]
\newtheorem{proposition}[thm]{Proposition}
\newtheorem{defn}[thm]{Definition}
\theoremstyle{plain}
\numberwithin{equation}{theorem}
\theoremstyle{remark}
\newtheorem{rem}[thm]{Remark}
\newtheorem{ex}[thm]{Example}
\newcommand{\F}{{\mathbb F}}
\newcommand{\bG}{{\mathbb G}}
\newcommand{\Fp}{\mathbb F_p}
\newcommand{\Fq}{\mathbb F_q}
\newcommand{\lra}{\longrightarrow}
\title[The Mordell-Lang conjecture in prime characteristic]{The Mordell-Lang conjecture for semiabelian varieties defined over fields of positive characteristic}
\author{Dragos Ghioca}
\address{Department of Mathematics \\ University of British Columbia \\ 1984 Mathematics Road \\ Canada V6T 1Z2}
\email{dghioca@math.ubc.ca}
\author{She Yang}
\address{Beijing International Center for Mathematical Research \\ Peking University \\ Beijing 100871 \\ China}
\email{ys-yx@pku.edu.cn}
\subjclass[2020]{Primary: 11G10; Secondary: 14G17}
\keywords{semiabelian varieties; finite fields; Mordell-Lang conjecture}
\begin{document}

\begin{abstract}
Let $G$ be a semiabelian variety defined over  an algebraically closed  field $K$ of prime characteristic. 
We  describe the intersection of a subvariety $X$ of $G$ with a finitely generated subgroup of $G(K)$.  
\end{abstract}

\maketitle


\section{Introduction}

The purpose of this note is to prove a variant of the Mordell-Lang conjecture for semiabelian varieties defined over fields of positive characteristic. More precisely, let $G$ be a semiabelian variety defined over an algebraically closed field $K$, i.e., there exists a short exact sequence of algebraic groups defined over $K$:
\begin{equation}
\label{eq:0}
1\lra \mathbb{G}_m^N\lra G\lra A\lra 1,
\end{equation}
where $N\ge 0$ is an integer and $A$ is an abelian variety. Assuming $K$ has characteristic $p>0$, then for any subvariety $X\subseteq G$ defined over $K$ and any finitely generated subgroup $\Gamma\subset G(K)$, we describe the intersection $X(K)\cap\Gamma$. In particular, we fix an error in the paper \cite{TAMS} of the first author where a simplified form of the aforementioned was claimed in the case $G$ is defined over a finite subfield of $K$; we present several examples showing that the intersection $X(K)\cap\Gamma$ involves the more general $F$-sets appearing in Definition~\ref{def:F1}.


\subsection{General background}

The Mordell-Lang conjecture for semiabelian varieties $G$ defined over fields of characteristic $0$ predicts that the intersection of a subvariety $X\subseteq G$ with a finitely generated subgroup $\Gamma$ of $G$ is a finite union of cosets of subgroups of $\Gamma$. This conjecture was proven by Laurent \cite{Laurent} in the case of tori, Faltings \cite{Faltings} in the case of abelian varieties, and by  Vojta \cite{Vojta} in the general case of semiabelian varieties. In particular, their results show that if $X$ is an irreducible subvariety of $G$ which intersects a finitely generated group in a Zariski dense subset, then $X$ must be a translate of a semiabelian subvariety of $G$.

The picture for positive characteristic fields $K$ is more complicated due to the existence of the Frobenius endomorphism for varieties defined over finite fields; in particular, it is no longer true that only translates of semiabelian subvarieties of $G$ have the property that they intersect a finitely generated subgroup of $G$ in a Zariski dense subset. Hrushovski \cite{Hrushovski} obtained the right shape for the irreducible subvarieties $X$ whose intersection with a finitely generated subgroup $\Gamma$ is Zariski dense. 
\begin{thm}[Hrushovski \cite{Hrushovski}]
\label{thm:H}
Let $G$ be a semiabelian variety defined over an algebraically closed field $K$ of characteristic $p$. Let $\Gamma\subset G(K)$ be a finitely generated subgroup and let $X\subseteq G$ be an irreducible subvariety with the property that $X(K)\cap\Gamma$ is Zariski dense in $X$. Then there exists $\gamma\in G(K)$, there exists a semiabelian subvariety $G_0\subseteq G$ defined over $K$, there exists a semiabelian variety $H$ along with a subvariety $X_0\subseteq H$ both defined over a finite subfield $\Fq$ of $K$, and there exists a surjective group homomorphism $h:G_0\lra H$ such that $X=\gamma+h^{-1}(X_0)$.
\end{thm}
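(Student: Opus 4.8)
The plan is to prove this statement through the model theory of difference fields, which is the framework in which Hrushovski established it. First I would pass to a large existentially closed difference field $(\mathcal{U},\sigma)$ of characteristic $p$, i.e.\ a model of the theory ACFA$_p$, into which $K$ embeds so that the distinguished automorphism $\sigma$ restricts on $\Fpbar$ to a power of the Frobenius $x\mapsto x^p$. The point of this move is that the fixed field $\mathrm{Fix}(\sigma)=\{x\in\mathcal{U}:\sigma(x)=x\}$ is then (the algebraic closure of) a finite field $\Fq$, and the Frobenius phenomenon that obstructs the naive characteristic-zero conclusion becomes visible as non-orthogonality to this fixed field.

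Next I would replace the finitely generated group $\Gamma$ by the smallest $\sigma$-invariant subgroup $\Gamma'\subseteq G(\mathcal{U})$ containing it, i.e.\ the $\Z[\sigma]$-submodule it generates. The key structural input is that $\Gamma'$ is then a type-definable subgroup of finite $\mathrm{SU}$-rank in the supersimple theory ACFA$_p$; since $X(K)\cap\Gamma$ is Zariski dense in the irreducible $X$, so is $X\cap\Gamma'$, and I can therefore study the generic type $p$ of $X\cap\Gamma'$ inside a finite-rank definable group. The whole problem is thereby reduced to understanding finite-rank types concentrating on a subgroup, where the machinery of canonical bases, orthogonality, and the analysis of minimal types applies.

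The heart of the argument is the \emph{dichotomy theorem} of Chatzidakis--Hrushovski for ACFA: every minimal (rank-one) type is either one-based (modular) or non-orthogonal to the fixed field $\mathrm{Fix}(\sigma)$. I would perform a semiminimal analysis of $p$, decomposing it into minimal pieces. The modular pieces force rigidity: the corresponding part of $X$ must be a translate of a semiabelian subvariety, contributing the coset data $\gamma+G_0$. The pieces non-orthogonal to $\mathrm{Fix}(\sigma)$ are, by internality, definably isomorphic to configurations living over $\Fq$; unwinding this isomorphism produces a semiabelian variety $H$ and a subvariety $X_0$ over $\Fq$ together with a surjective homomorphism $h:G_0\lra H$ realizing $X$ as $\gamma+h^{-1}(X_0)$. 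Assembling the modular and fixed-field parts of the analysis, and checking that the descent to $\Fq$ can be carried out compatibly, yields the asserted shape.

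The main obstacle is the dichotomy theorem itself: proving that a non-modular minimal type in ACFA$_p$ must be non-orthogonal to the fixed field. This is the deepest ingredient and requires the Zariski-geometries machinery of Hrushovski--Zilber in its difference-field incarnation; everything downstream (the socle and semiminimal decomposition, the rigidity of modular groups, and the translation of field-internality into an honest geometric statement) is delicate but more routine by comparison. A secondary difficulty is purely geometric: turning the model-theoretic assertion that $p$ is internal to $\mathrm{Fix}(\sigma)$ into the explicit factorization through a homomorphism $h$ defined on a semiabelian subvariety, which requires identifying $\mathrm{Fix}(\sigma)$-internal definable groups with semiabelian varieties over $\Fq$.
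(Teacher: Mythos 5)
First, a point of comparison that matters: the paper does not prove Theorem~\ref{thm:H} at all. It is quoted from Hrushovski's paper \cite{Hrushovski} and used purely as a black box (in the proof of Theorem~\ref{thm:main_2}). So your proposal has to be measured against Hrushovski's published argument, and there it has a structural gap. The theorem concerns a semiabelian variety $G$ over an \emph{arbitrary} algebraically closed field $K$ of characteristic $p$; in general $G$ is not defined over a finite field, and then the distinguished automorphism $\sigma$ of your ambient model of ACFA$_p$ does not act on $G(\mathcal{U})$ at all --- it maps $G(\mathcal{U})$ to the twist $G^{\sigma}(\mathcal{U})$, which is a different variety unless the defining equations of $G$ are $\sigma$-fixed. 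Consequently your very first reduction, ``replace $\Gamma$ by the $\Z[\sigma]$-submodule it generates,'' is not a well-defined operation inside $G(\mathcal{U})$ except in the isotrivial case. The difference-field toolkit you describe (kernels of difference polynomials, internality to $\mathrm{Fix}(\sigma)$, the Chatzidakis--Hrushovski dichotomy, which in any case postdates this theorem) is the machinery behind Hrushovski's Manin--Mumford proof and behind the Moosa--Scanlon result (Theorem~\ref{thm:R-T}), i.e.\ behind the \emph{isotrivial} theory. Hrushovski's proof of the present theorem instead works in the theory of separably closed fields of finite degree of imperfection: the finite-rank type-definable group is $G^\sharp=\bigcap_n p^nG(K)$ (multiplication by $p$ makes sense for every $G$, unlike $\sigma$), the hypothesis that $\Gamma$ is finitely generated is used to fit the relevant part of $\Gamma$ inside $G^\sharp$, and the dichotomy invoked is ``modular, or non-orthogonal to the constants $\bigcap_n K^{p^n}$,'' from which the descent of $H$ and $X_0$ to a finite subfield is extracted.

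Even if one restricts to the isotrivial case, where your setup does typecheck, two of your assertions are wrong as stated. First, the fixed field of a model of ACFA$_p$ is an infinite pseudofinite field, not ``(the algebraic closure of) a finite field $\Fq$''; internality to $\mathrm{Fix}(\sigma)$ therefore does not by itself place $H$ and $X_0$ over a finite field, and a separate descent argument is needed. Second, the finitely generated $\Z[\sigma]$-module $\Gamma'$ is a countable abstract group and is never type-definable in a saturated model (an infinite type-definable set in a $\kappa$-saturated model has cardinality at least $\kappa$); the correct statement is that $\Gamma'$ --- indeed all of $G(K)$ --- lies inside the finite-rank $\sigma$-closed subgroup $\ker h(\sigma)$, where $h$ is the monic integral polynomial annihilating the Frobenius, and it is to that group that the socle and semiminimal analysis apply. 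Both of these are repairable in the isotrivial setting (this repair is essentially how the Moosa--Scanlon theorem is proved), but the first gap is not: for non-isotrivial $G$ a finitely generated $\Gamma\subset G(K)$ satisfies no difference equation, and that is precisely why Hrushovski's proof runs through separably closed fields and $G^\sharp$ rather than through ACFA.
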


However, \cite{Hrushovski} left open the description of the actual intersection between the subvariety $X$ and the group $\Gamma$; next, we will address exactly this issue.


\subsection{The case of semiabelian varieties defined over finite fields and of finitely generated subgroups invariant under the Frobenius endomorphism}

Essentially, Hrushovski's result (see Theorem~\ref{thm:H}) reduced the description of the intersection $X(K)\cap\Gamma$ to the case when the ambient semiabelian variety is defined over a finite field. Moosa and Scanlon \cite{F-sets, F-sets_2} addressed precisely this problem under an additional assumption on the subgroup $\Gamma$; in order to state their main result, we introduce a little bit of notation.

\begin{defn}
\label{def:F0}
For a semiabelian variety $G$ defined over a finite subfield $\Fq$ of an algebraically closed field $K$ of characteristic $p$, we define a \emph{groupless $F$-set} any subset of $G(K)$ of the form:
\begin{equation}
\label{eq:F-orbits}
\left\{\alpha_0+\sum_{i=1}^r F^{kn_i}(\alpha_i)\colon n_i\in\mathbb{N}\right\},
\end{equation}
where $r\ge 0$, $\alpha_0,\alpha_1,\dots, \alpha_r\in G(K)$ and $k\in\mathbb{N}$, while $F$ is the Frobenius endomorphism of $G$ corresponding to  the finite field $\Fq$. 

For any finitely generated subgroup $\Gamma\subset G(K)$,  we define a \emph{groupless $F$-set in $\Gamma$} as a groupless $F$-set contained in $\Gamma$. Also, an \emph{$F$-set in $\Gamma$} is any set of the form $S+B$, where $S$ is a groupless $F$-set in $\Gamma$ and $B$ is a subgroup of $\Gamma$ (as always, for any two subsets $B$ and $C$ of $G$, we have that $C+B$ is simply the set of all $c+b$ where $b\in B$ and $c\in C$).
\end{defn}

\begin{rem}
\label{rem:same_k}
In \cite[Theorem B]{F-sets}, Moosa and Scanlon allowed for the possibility that a groupless $F$-set involves sums of $F$-orbits as in equation~\eqref{eq:F-orbits} of the form
\begin{equation}
\label{eq:F-set_0}
\alpha_0+\sum_{i=1}^r F^{k_in_i}(\alpha_i)\text{ (as $n_i$ vary in $\mathbb{N}$),}
\end{equation}
for given, but potentially distinct, positive integers $k_i$. However, each  $F$-set from equation~\eqref{eq:F-set_0} is a union of finitely many $F$-sets given as in Definition~\ref{def:F0} (simply by working with $k$ as the least common multiple of $k_1,\dots, k_r$).
\end{rem}

\begin{thm}[Moosa-Scanlon \cite{F-sets}]
\label{thm:R-T}
Let $G$ be a semiabelian variety defined over a finite subfield $\Fq$ of an algebraically closed field $K$ and let $F:G\lra G$ be the Frobenius endomorphism associated to the finite field $\Fq$. Let $X\subseteq G$ be a subvariety defined over $K$ and let $\Gamma\subset G(K)$ be a finitely generated subgroup. If $\Gamma$ is invariant under  $F^\ell$ for some $\ell\in\mathbb{N}$, then $X(K)\cap\Gamma$ is a finite union of $F$-sets in $\Gamma$. 
\end{thm}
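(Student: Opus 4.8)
The plan is to prove Theorem~\ref{thm:R-T} by combining Hrushovski's structural result (Theorem~\ref{thm:H}) with the $F^\ell$-invariance of $\Gamma$, reducing the problem to a finite combinatorial analysis of how $\Gamma$ sits inside sums of Frobenius orbits. Let me sketch the approach.

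Let me think about what $F^\ell$-invariance buys us.

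First, decompose $X$ into irreducible components. Since $X(K) \cap \Gamma$ is a finite union over the finitely many irreducible components, it suffices to treat each component separately, and moreover we may replace $X$ by the Zariski closure of $X(K) \cap \Gamma$ in $X$. So without loss of generality $X$ is irreducible and $X(K) \cap \Gamma$ is Zariski dense in $X$. Now apply Theorem~\ref{thm:H}: we obtain $\gamma \in G(K)$, a semiabelian subvariety $G_0 \subseteq G$, a semiabelian variety $H$ and subvariety $X_0 \subseteq H$ both over $\Fq$, and a surjection $h : G_0 \lra H$ with $X = \gamma + h^{-1}(X_0)$.

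Now I think about the role of the Frobenius and invariance.

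The key structural input is the $F^\ell$-invariance: since $\Gamma$ is invariant under $F^\ell$, the finitely generated group $\Gamma$ is a module over the ring $\Z[F^\ell]$ (or rather over the quotient of $\Z[t]$ acting through $F^\ell$), and finite generation as a group forces $F^\ell$ to satisfy a monic integer polynomial relation on $\Gamma$. This is what makes the Frobenius orbits $\{F^{kn}(\alpha) : n \in \N\}$ land inside $\Gamma$ and what will ultimately produce the groupless $F$-sets of Definition~\ref{def:F0}.

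Now the main technical steps.

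Next, I would translate by $\gamma$: writing $\Gamma' = \Gamma - \gamma$ replaces the study of $X(K) \cap \Gamma$ by $(X - \gamma)(K) \cap \Gamma' = h^{-1}(X_0)(K) \cap \Gamma'$, a coset of $\Gamma$. Pushing forward along $h$, one reduces to understanding $X_0(K) \cap h(\Gamma')$ inside the semiabelian variety $H$ defined over the finite field $\Fq$, where the Frobenius $F$ genuinely acts. The heart of the argument is then the following: for $H$ and $X_0$ defined over $\Fq$ with $X_0(K)$ meeting an $F^\ell$-invariant finitely generated group densely, the points of $X_0$ in that group are governed by the finitely many $\Fq$-defining equations, and the Frobenius-orbit structure of the group produces exactly the shape $\alpha_0 + \sum_{i=1}^r F^{kn_i}(\alpha_i)$. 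Concretely, I would argue that each point of $X_0(K) \cap \Gamma$ is a $\Z[F]$-combination of a fixed finite generating set, expand such combinations using the monic relation satisfied by $F^\ell$ on $\Gamma$, and sort the resulting countable set into finitely many families indexed by which generators appear with unbounded Frobenius exponent versus bounded (the latter contributing the subgroup part $B$, the former the groupless $F$-set part $S$).

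Finally, the obstacle and bookkeeping.

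The hard part will be the last step: controlling the descent from $X_0(K) \cap \Gamma$ back to a \emph{finite} union of $F$-sets. A priori the combinatorial analysis yields an infinite family of candidate groupless $F$-sets (one for each choice of exponent pattern), and the real content is showing finiteness --- that only finitely many patterns actually occur on $X_0$. This is where the geometry of $X_0$ as a subvariety of $H$ over $\Fq$ must be used decisively, via a Noetherian/dimension-induction argument on $X_0$ (or, following Moosa--Scanlon, via the model-theoretic finiteness of the induced structure), rather than the purely group-theoretic bookkeeping that handles the orbit shapes. I would also need to carefully reconcile the normalization in Remark~\ref{rem:same_k}, replacing the distinct exponents $k_i$ by their least common multiple so that a single $k$ suffices in each groupless $F$-set, and to pull the resulting $F$-sets in $H$ back through $h^{-1}$ and the translation by $\gamma$, checking that the pullback of an $F$-set is again a finite union of $F$-sets in the original $\Gamma$ --- this compatibility of $F$-sets with the homomorphism $h$ and with preimages being the final piece of routine but necessary verification.
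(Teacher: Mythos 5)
You should know at the outset that the paper does not prove this statement at all: Theorem~\ref{thm:R-T} is Moosa--Scanlon's theorem, quoted from \cite{F-sets} and used as a black box (it is the external input invoked in Case 2 of the proof of Theorem~\ref{thm:main}, at equation~\eqref{eq:in-3}). So your proposal is an attempt to reprove a cited result, and judged on its own it has a genuine gap: it defers precisely the content of the theorem. After your (correct, standard) reductions --- irreducible $X$, Zariski dense intersection, Theorem~\ref{thm:H} --- you are left with a subvariety $X_0\subseteq H$, both over a finite field, meeting a finitely generated $F$-invariant group densely, and you must show this intersection is a finite union of $F$-sets. Your proposed mechanism (``expand each point as a $\Z[F]$-combination of generators and sort by bounded versus unbounded exponents'') is not an argument: nothing in it forces the set of exponent vectors $(n_1,\dots,n_r)$ for which $\alpha_0+\sum_i F^{kn_i}(\alpha_i)$ lands on $X_0$ to have the prescribed shape --- establishing that \emph{is} the theorem. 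The two escape routes you name both fail: Noetherian/dimension induction cannot close the argument, because the irreducible, dense, finite-stabilizer case admits no dimension reduction (this is exactly why the paper's own dimension-induction proof of Theorem~\ref{thm:main} must invoke Moosa--Scanlon at that point instead of recursing); and ``following Moosa--Scanlon, via the model-theoretic finiteness of the induced structure'' is circular --- it cites the theorem being proved. Hrushovski's theorem describes the variety $X$, not the intersection; the passage from the former to the latter is the entire content of \cite{F-sets}.

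Two further steps you label as bookkeeping would also fail as written. First, Theorem~\ref{thm:H} guarantees that $H$ and $X_0$ are defined over a finite field, but not that $G_0$ or $h$ are; without $h$ defined over a finite field there is no intertwining identity of the form $h\circ F=F'\circ h$, so ``pushing the Frobenius-orbit structure through $h$'' has no meaning (one can in fact show semiabelian subvarieties of $G$ and such homomorphisms can be taken over finite fields here, but that requires an argument you do not supply). Relatedly, $\Gamma-\gamma$ is a coset, not a group, and only its intersection with $G_0(K)$ can be pushed forward. Second, you call the pullback of $F$-sets through $h$ restricted to $\Gamma$ ``routine but necessary verification''; the present paper exists precisely because that type of claim is false for a general finitely generated $\Gamma$ --- it is the identified error in \cite{TAMS}, witnessed by Examples~\ref{ex:1},~\ref{ex:2}~and~\ref{ex:3}. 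Under the $F^\ell$-invariance hypothesis it can be salvaged (use Remark~\ref{rem:same_k} to make the exponent $k$ divisible by $\ell$, lift the base points $\alpha_i$ into $\Gamma$, and absorb $\ker(h)\cap\Gamma$ into the subgroup part $B$ of the $F$-set), but that is where the hypothesis does real work and it deserves a proof, not a remark.
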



\subsection{The case of an arbitrary finitely generated subgroup}

It is natural to ask whether the above description from Theorem~\ref{thm:R-T} of the intersection $X(K)\cap\Gamma$ remains valid also when $\Gamma$ is no longer invariant under a power of the Frobenius endomorphism of $G$ (but only assume $\Gamma$ is finitely generated). 

One could consider the $\mathbb{Z}[F]$-submodule $\tilde{\Gamma}\subset G(K)$ spanned by $\Gamma$ and since $F$ is integral over $\mathbb{Z}$ (seen as a subring of ${\rm End}(G)$), then $\tilde{\Gamma}$ is still finitely generated and so, Moosa-Scanlon's result (see Theorem~\ref{thm:R-T}) yields that $X(K)\cap\tilde{\Gamma}$ is a finite union of $F$-sets in $\tilde{\Gamma}$. So, the problem reduces to understanding the intersection of an $F$-set $S$ in $\tilde{\Gamma}$ with $\Gamma$. The first author \cite[Theorem~3.1]{TAMS} proved that when $S$ is a \emph{groupless} $F$-set in $\tilde{\Gamma}$, then its intersection with $\Gamma$ is a finite union of groupless $F$-sets in $\Gamma$. Also, the first author analyzed in \cite{TAMS} the intersection with $\Gamma$ of an arbitrary $F$-set in $\tilde{\Gamma}$; however, the final assertion from \cite[Step~3,~p.~3842]{TAMS} claiming that the general case of an $F$-set reduces to the groupless case is not valid, as shown by the constructions from Section~\ref{sec:examples} (see Examples~\ref{ex:1}~and~\ref{ex:2} which were found by the second author). Essentially, the error from \cite{TAMS} was to claim that the pullback of a groupless $F$-set in $\tilde{\Gamma}$ through a group homomorphism restricted to $\Gamma$ must be an $F$-set in $\Gamma$ (as in Definition~\ref{def:F0}). Furthermore, Example~\ref{ex:3} shows that when $\Gamma$ is an arbitrary finitely generated subgroup, the intersection $X(K)\cap\Gamma$ can be quite wild; this motivates our Definition~\ref{def:F1}  which yields the right form of the sets appearing in the intersection of a subvariety of $G$ with a finitely generated group.

\begin{defn}
\label{def:F1}
For a semiabelian variety $G$ defined over a finite subfield $\Fq$ of an algebraically closed field $K$ of characteristic $p$ and a finitely generated subgroup $\Gamma\subset G(K)$, we define a \emph{generalized $F$-set in $\Gamma$} any subset of $\Gamma$ of the form:
\begin{equation}
\label{eq:pi}
\left(\pi|_{\Gamma}\right)^{-1}(S),
\end{equation}
where $\pi:G\lra H$ is a surjective group homomorphism of semiabelian varieties both defined over a finite subfield of $K$ for which $\dim(\ker(\pi))>0$, $\pi|_\Gamma$ is its restriction to the subgroup $\Gamma$, and $S\subset H(K)$ is a groupless $F$-set in $\pi(\Gamma)$. 

Note that $H$ may be defined over another finite subfield $\F_{q'}\subset K$ and thus the set $S$ from equation~\eqref{eq:pi} is a groupless $F$-set in $\pi(\Gamma)$ where $F$ stands for the Frobenius endomorphism of $H$ associated to the finite field $\F_{q'}$.
\end{defn}


\subsection{Our results}
Now we can state our main results, first for describing the intersection with a finitely generated group of a subvariety of a semiabelian variety defined over a finite field.

\begin{thm}
\label{thm:main}
Let $G$ be a semiabelian variety defined over a finite subfield $\Fq$ of an algebraically closed field $K$ of characteristic $p$. Let $X\subset G$ be a subvariety defined over $K$ and let $\Gamma\subset G(K)$ be a finitely generated subgroup. Then the intersection $X(K)\cap\Gamma$ is a  union of finitely many groupless $F$-sets in $\Gamma$ along with finitely many generalized $F$-sets in $\Gamma$. 
\end{thm}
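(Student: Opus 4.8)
The plan is to follow the reduction sketched in the introduction and to repair the final step of \cite{TAMS} by allowing generalized $F$-sets to appear. First I would pass to the $\Z[F]$-module $\tilde{\Gamma}\subset G(K)$ spanned by $\Gamma$; since $F$ is integral over $\Z$, the module $\tilde{\Gamma}$ is finitely generated and manifestly $F$-invariant, so Theorem~\ref{thm:R-T} applies and writes $X(K)\cap\tilde{\Gamma}$ as a finite union of $F$-sets $T_i+B_i$ in $\tilde{\Gamma}$. Intersecting with $\Gamma$ gives $X(K)\cap\Gamma=\bigcup_i\bigl((T_i+B_i)\cap\Gamma\bigr)$, so it suffices to treat a single $F$-set $T+B$, where $T$ is a groupless $F$-set in $\tilde{\Gamma}$ and $B$ is a subgroup of $\tilde{\Gamma}$. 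The structural input I would extract from \cite{F-sets,F-sets_2} is that each group part may be taken in the \emph{saturated} form $B=\tilde{\Gamma}\cap G_1$, where $G_1\subseteq G$ is a semiabelian subvariety defined over a finite subfield of $K$ (these are the only subgroups that produce Zariski-dense, positive-dimensional contributions to the intersection, and $F$-invariance of such a $G_1$ is equivalent to its being defined over a finite field). Securing this saturated, finite-field form of the group parts is, to my mind, the main obstacle, and it is precisely the point where \cite{TAMS} went astray.

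When $\dim G_1=0$ the subgroup $B$ is finite, and $T+B=\bigcup_{b\in B}(T+b)$ is a finite union of translates of $T$; since a translate of a groupless $F$-set is again a groupless $F$-set (absorb $b$ into the constant term $\alpha_0$ of equation~\eqref{eq:F-orbits}), \cite[Theorem~3.1]{TAMS} shows that each $(T+b)\cap\Gamma$ is a finite union of groupless $F$-sets in $\Gamma$. This disposes of the finite case.

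The interesting case is $\dim G_1>0$. Let $\pi\colon G\to H:=G/G_1$ be the quotient homomorphism, which is surjective, defined over a finite field, and satisfies $\ker(\pi)=G_1$ with $\dim\ker(\pi)>0$. Because $\pi$ is defined over a finite field it commutes with $F$, so $\pi(\tilde{\Gamma})$ is the $\Z[F]$-span of $\pi(\Gamma)$. Using that $B=\tilde{\Gamma}\cap G_1=\tilde{\Gamma}\cap\ker(\pi)$ \emph{exactly}, for $\gamma\in\Gamma$ and $t\in T\subseteq\tilde{\Gamma}$ the difference $\gamma-t$ already lies in $\tilde{\Gamma}$, whence $\gamma-t\in B\iff\pi(\gamma)=\pi(t)$. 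Therefore
\[
(T+B)\cap\Gamma=\{\gamma\in\Gamma:\pi(\gamma)\in\pi(T)\}=\bigl(\pi|_{\Gamma}\bigr)^{-1}\bigl(\pi(T)\cap\pi(\Gamma)\bigr).
\]
Now $\pi(T)$ is a groupless $F$-set in $\pi(\tilde{\Gamma})$, so \cite[Theorem~3.1]{TAMS} applied inside $H$ expresses $\pi(T)\cap\pi(\Gamma)$ as a finite union $\bigcup_j S_j$ of groupless $F$-sets in $\pi(\Gamma)$. Consequently $(T+B)\cap\Gamma=\bigcup_j\bigl(\pi|_{\Gamma}\bigr)^{-1}(S_j)$ is a finite union of generalized $F$-sets in $\Gamma$ in the sense of Definition~\ref{def:F1}, the hypothesis $\dim\ker(\pi)>0$ being exactly $\dim G_1>0$. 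Reassembling the finitely many pieces over all $i$ yields the asserted description.

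The crux is the displayed identity, which holds only because $B$ is the \emph{full} intersection $\tilde{\Gamma}\cap\ker(\pi)$: the error in \cite{TAMS} was to identify the pullback $\bigl(\pi|_{\Gamma}\bigr)^{-1}(S_j)$ with an ordinary $F$-set in $\Gamma$, whereas in general it is only a generalized $F$-set. I expect the genuinely delicate point to be the verification that the group parts can be arranged in the saturated finite-field form $\tilde{\Gamma}\cap G_1$; if one only knew $B\subsetneq\tilde{\Gamma}\cap G_1$ with infinite index, the clean identity would break and the pullback would overshoot $(T+B)\cap\Gamma$, so controlling the saturation of the group parts is essential to the argument.
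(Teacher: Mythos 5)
Your argument takes a genuinely different route from the paper's, and its core is sound. The paper proceeds by induction on $\dim(X)$ with a dichotomy on ${\rm Stab}_G(X)$: when the stabilizer is positive-dimensional it quotients by it and applies the inductive hypothesis; when it is finite it applies Theorem~\ref{thm:R-T} to $\tilde{\Gamma}$, and an $F$-set $S_i+\Gamma_i$ with $\Gamma_i$ infinite is handled not by computing $(S_i+\Gamma_i)\cap\Gamma$ directly but by passing to the Zariski closure of $S_i+\Gamma_i$ (a proper subvariety of $X$, since its stabilizer contains the infinite group $\Gamma_i$) and invoking induction again. You avoid induction and the stabilizer entirely: once the group parts are saturated, your identity $(T+B)\cap\Gamma=\bigl(\pi|_{\Gamma}\bigr)^{-1}\bigl(\pi(T)\cap\pi(\Gamma)\bigr)$ manufactures the generalized $F$-sets in one stroke, with \cite[Theorem~3.1]{TAMS} applied in the quotient $H$. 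That identity, and your proof of it, are correct precisely because $B$ equals $\tilde{\Gamma}\cap\ker(\pi)(K)$ on the nose; your route also computes each piece $(T_i+B_i)\cap\Gamma$ exactly, whereas the paper's Case 2a only traps it between $(S_i+\Gamma_i)\cap\Gamma$ and $X(K)\cap\Gamma$ and lets the union sort itself out.

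The gap is the saturation step itself, which you defer to \cite{F-sets,F-sets_2}: Theorem~\ref{thm:R-T} --- the form of Moosa--Scanlon's result available here --- asserts only that the group parts $B_i$ are subgroups of $\tilde{\Gamma}$, not that they may be taken of the form $\tilde{\Gamma}\cap G_1(K)$ with $G_1$ a semiabelian subvariety defined over a finite field. Since, as you say yourself, this exactness is what makes your displayed identity work, it must be proved rather than cited. Fortunately the proof is short, along the lines your parenthetical gestures at: given $S+B\subseteq X(K)\cap\tilde{\Gamma}$, let $H_1$ be the Zariski closure of $B$, an algebraic subgroup of $G$; for each $s\in S$, the inclusion $s+B\subseteq X(K)$ forces $s+H_1\subseteq X$, so replacing $B$ by $\tilde{\Gamma}\cap H_1(K)$ enlarges each $F$-set while keeping it inside $X(K)\cap\tilde{\Gamma}$, hence preserves the union. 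The identity component $H_1^0$ is a semiabelian subvariety of $G$, hence defined over a finite subfield of $K$ (the same standard fact the paper invokes for ${\rm Stab}_G(X)$ in its Case 1), and $\tilde{\Gamma}\cap H_1(K)$ is a finite union of cosets of $\tilde{\Gamma}\cap H_1^0(K)$, the coset representatives being absorbed into translates of the groupless part $T$. Two smaller patches are also needed: $\pi$ commutes only with $F^{m}$ (where $\pi$ and $H$ are defined over $\F_{q^{m}}$), so $\pi(T)$ is a finite union of groupless $F$-sets in $H(K)$ rather than a single one --- split $T$ as in Remark~\ref{rem:same_k}; and your claim that $\pi(\tilde{\Gamma})$ is the $\mathbb{Z}[F]$-span of $\pi(\Gamma)$ is both unneeded and not quite right for the same reason, since \cite[Theorem~3.1]{TAMS} requires only a groupless $F$-set and a finitely generated subgroup. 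With the saturation lemma inserted, your proof is complete and is arguably more direct than the paper's.
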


Our Examples~\ref{ex:1},~\ref{ex:2}~and~\ref{ex:3} show that the sets appearing as intersections between a subvariety $X$ of a semiabelian variety $G$ defined over a finite field with a finitely generated subgroup can be quite complicated, well-beyond the world of $F$-sets from Definition~\ref{def:F0}. However, when  $X$ is a curve or $G$ is a simple semiabelian variety (i.e., either a simple abelian variety or a $1$-dimensional torus), then we can show that the intersection $X(K)\cap\Gamma$ is a finite union of $F$-sets in $\Gamma$.

\begin{thm}
\label{thm:3}
Let $G$ be a semiabelian variety defined over a finite subfield of an algebraically closed field $K$ of prime characteristic, let $X\subseteq G$ be a subvariety defined over $K$ and let $\Gamma\subset G(K)$ be a finitely generated subgroup. If either $\dim(X)=1$ or $G$ is a simple semiabelian variety, then $X(K)\cap\Gamma$ is a finite union of $F$-sets.
\end{thm}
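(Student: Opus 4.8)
The plan is to reduce everything to the Frobenius-saturated group $\tilde\Gamma:=\Z[F]\cdot\Gamma$, apply Theorem~\ref{thm:R-T} there, and then control how each resulting $F$-set in $\tilde\Gamma$ meets $\Gamma$, the behaviour being dictated by whether its group part is finite or infinite.

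First I would reduce to the case where $X$ is irreducible with $X(K)\cap\Gamma$ Zariski dense in $X$: writing $X$ as the union of its finitely many irreducible components reduces to a single component, and a curve on which the intersection is not dense meets $\Gamma$ in a proper closed, hence finite, subset, which is a finite union of points (each a groupless $F$-set with $r=0$). Since $F$ is integral over $\Z$ inside $\End(G)$, the group $\tilde\Gamma$ is finitely generated and $F$-invariant, so Theorem~\ref{thm:R-T} gives $X(K)\cap\tilde\Gamma=\bigcup_j(S_j+B_j)$, a finite union of $F$-sets in $\tilde\Gamma$ with each $S_j$ a groupless $F$-set and each $B_j\le\tilde\Gamma$ a subgroup. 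Intersecting with $\Gamma$ yields $X(K)\cap\Gamma=\bigcup_j\big((S_j+B_j)\cap\Gamma\big)$, so it suffices to treat a single $F$-set $S_0+B_0\subseteq X(K)$.

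The key dichotomy is on the size of $B_0$. If $B_0$ is finite, then $S_0+B_0=\bigcup_{b\in B_0}(S_0+b)$ is a finite union of \emph{groupless} $F$-sets in $\tilde\Gamma$ (a translate of a groupless $F$-set by a point of $G(K)$ is again groupless and still lies in $\tilde\Gamma$), and \cite[Theorem~3.1]{TAMS} shows each $(S_0+b)\cap\Gamma$ is a finite union of groupless $F$-sets in $\Gamma$; this is exactly the groupless descent from $\tilde\Gamma$ to $\Gamma$ that remains valid. If instead $B_0$ is infinite, fix $s\in S_0$ and set $B:=\big(\overline{B_0}\big)^0$, a positive-dimensional semiabelian subvariety of $G$; then $s+B\subseteq s+\overline{B_0}=\overline{s+B_0}\subseteq X$. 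In the curve case this forces $s+B=X$, so $X$ is a single coset of the $1$-dimensional semiabelian subvariety $B$, and a direct computation gives $X(K)\cap\Gamma=\gamma_*+(\Gamma\cap B)$ for any $\gamma_*\in X(K)\cap\Gamma$, a single coset of a subgroup, hence an $F$-set. In the simple case, $B$ is a positive-dimensional semiabelian subvariety of the simple $G$, so $B=G$ and $S_0+B_0$ is Zariski dense in $G$; as it lies in $X$ this forces $X=G$, where $X(K)\cap\Gamma=\Gamma$ is itself an $F$-set, and otherwise every $B_j$ is finite and we are back in the finite-group-part case.

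The step I expect to be the main obstacle is the honest verification that an infinite group part $B_0$ is genuinely incompatible with $\dim X=1$ (respectively with $G$ simple and $X\subsetneq G$) outside the degenerate coset situation: one must check carefully that the Zariski closure of an infinite subgroup of $\tilde\Gamma$ forces a positive-dimensional semiabelian subvariety into $X$, and manage the bookkeeping so that each $F$-set with infinite group part is absorbed by a coset of $B$ equal to a whole irreducible component of $X$ (on which the full intersection with $\Gamma$ is then a single coset). Once this is in place, all remaining $F$-sets have finite group part and are dispatched by the groupless descent of \cite[Theorem~3.1]{TAMS}, completing the proof in both cases.
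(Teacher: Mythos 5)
Your proof is correct, and for the curve case it is essentially the paper's own argument in a different packaging: the paper (Proposition~\ref{prop:curve}) dichotomizes up front on whether ${\rm Stab}_G(X)$ is finite or positive-dimensional, the positive-dimensional case giving a single coset and the finite case being handled, exactly as you do, by passing to $\tilde{\Gamma}$, applying Theorem~\ref{thm:R-T}, reducing to groupless $F$-sets, and descending to $\Gamma$ via \cite[Theorem~3.1]{TAMS}; your dichotomy on the group parts $B_j$ is the same argument reorganized, since an infinite $B_j$ forces $X$ to be a coset of the identity component of $\overline{B_j}$ (i.e., forces a positive-dimensional stabilizer), and you make explicit the Zariski-closure argument that the paper leaves implicit behind the phrase ``we may assume that each such $F$-set is groupless.'' The genuine divergence is the simple case: the paper proves it (Proposition~\ref{prop:simple}) as a consequence of its main result, Theorem~\ref{thm:main}, whose proof requires the full inductive machinery and the notion of generalized $F$-sets, and then uses simplicity of $G$ to show that no generalized $F$-set can actually occur; you bypass Theorem~\ref{thm:main} entirely, observing that an infinite group part yields $s+G\subseteq X$, hence $X=G$ and the intersection is all of $\Gamma$, while otherwise every group part is finite and groupless descent applies. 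Your route makes Theorem~\ref{thm:3} rest only on Moosa--Scanlon and \cite[Theorem~3.1]{TAMS}, independent of the paper's main theorem, which is a real (if modest) simplification; what the paper's route buys is uniformity (the simple case becomes a two-line corollary of the general structure theorem) together with the sharper standalone statements of Propositions~\ref{prop:curve}~and~\ref{prop:simple}, namely that the intersection is a single coset, respectively a finite union of \emph{groupless} $F$-sets.
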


Next, combining our Theorem~\ref{thm:main} with Hrushovski's result (see Theorem~\ref{thm:H}), we obtain the description of the intersection of a subvariety of an arbitary semiabelian variety $G$ defined over a field of prime characteristic with a finitely generated subgroup of $G$. For this end we introduce the notion of \emph{pseudo-generalized $F$-sets}.

\begin{defn}
\label{def:F2}
Let $G$ be a semiabelian variety defined over an algebraicaly closed field $K$ of characteristic $p$ and let $\Gamma\subset G(K)$ be a finitely generated subgroup. A \emph{pseudo-generalized $F$-set in $\Gamma$} is a set of the form 
$$x_0+\left(\pi|_{\Gamma_0}\right)^{-1}(S),$$
where $x_0\in\Gamma$, $G_0\subseteq G$ is a semiabelian subvariety, $\Gamma_0=G_0(K)\cap\Gamma$, $H$ is a semiabelian variety defined over a finite subfield $\F_q\subset K$, $\pi:G_0\lra H$ is a surjective group homomorphism of semiabelian varieties, and $S\subset H(K)$ is a groupless $F$-set in $\pi(\Gamma_0)$.  
\end{defn}

\begin{rem}
\label{rem:less}
In Definition~\ref{def:F2}, if $G$ is defined over a finite subfield of $K$, then the pseudo-generalized $F$-sets from Definition~\ref{def:F2} cover both the  groupless $F$-sets in $\Gamma$ from Definition~\ref{def:F0} and also the generalized $F$-sets in $\Gamma$ from Definition~\ref{def:F1}, but it is a bit more general than those two types of sets. 
\end{rem}

\begin{thm}
\label{thm:main_2}
Let $G$ be a semiabelian variety defined over an algebraicaly closed field $K$ of characteristic $p$, let $X\subseteq G$ be a subvariety and let $\Gamma\subset G(K)$ be a finitely generated group. Then $X(K)\cap\Gamma$ is a finite union of pseudo-generalized $F$-sets in $\Gamma$.
\end{thm}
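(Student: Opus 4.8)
The plan is to bootstrap from the finite-field case (Theorem~\ref{thm:main}) by means of Hrushovski's structure theorem (Theorem~\ref{thm:H}), and then to verify that the class of pseudo-generalized $F$-sets is closed under the only two operations that arise: translation by an element of $\Gamma$, and pullback along a surjection onto a semiabelian variety defined over a finite field. First I would reduce to the case where $X$ is irreducible and $X(K)\cap\Gamma$ is Zariski dense in $X$. Replacing $X$ by $Y:=\overline{X(K)\cap\Gamma}$ changes nothing, since $X(K)\cap\Gamma=Y(K)\cap\Gamma$; so, arguing by Noetherian induction on $\dim X$ and decomposing into irreducible components, I may assume $X$ irreducible. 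Forming $Y=\overline{X(K)\cap\Gamma}$ once more: if $Y\subsetneq X$ then $\dim Y<\dim X$ and the inductive hypothesis applies to $Y$, while if $Y=X$ then $X(K)\cap\Gamma$ is Zariski dense in the irreducible variety $X$, which is the main case.

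In the main case, Theorem~\ref{thm:H} supplies $\gamma\in G(K)$, a semiabelian subvariety $G_0\subseteq G$ over $K$, a semiabelian variety $H$ and a subvariety $X_0\subseteq H$ both defined over a finite subfield $\Fq\subset K$, and a surjective homomorphism $h\colon G_0\lra H$ with $X=\gamma+h^{-1}(X_0)$. Since $X(K)\cap\Gamma$ is dense in $X$, it is nonempty, so I would choose a point $x_0\in X(K)\cap\Gamma$ and recenter at it. Setting $\beta:=h(x_0-\gamma)\in H(K)$, a short computation using that $h$ is a homomorphism gives $X=x_0+h^{-1}(X_0-\beta)$; because $x_0\in\Gamma$ and $h^{-1}(X_0-\beta)\subseteq G_0$, putting $\Gamma_0:=G_0(K)\cap\Gamma$ yields
\begin{equation*}
X(K)\cap\Gamma = x_0 + \left(h|_{\Gamma_0}\right)^{-1}\left((X_0-\beta)(K)\cap h(\Gamma_0)\right).
\end{equation*}
The crucial gain is that $H$ is now defined over a finite field and $h(\Gamma_0)$ is finitely generated, so Theorem~\ref{thm:main} describes $(X_0-\beta)(K)\cap h(\Gamma_0)$ as a finite union of groupless $F$-sets and generalized $F$-sets in $h(\Gamma_0)$.

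It then remains to push each such set back through $h|_{\Gamma_0}$ and translate by $x_0$, checking the output lies in the class of Definition~\ref{def:F2}. If $S$ is a groupless $F$-set in $h(\Gamma_0)$, then $x_0+(h|_{\Gamma_0})^{-1}(S)$ is already a pseudo-generalized $F$-set with $\pi=h$ (here the absence of any positivity requirement on $\dim\ker\pi$ in Definition~\ref{def:F2} is exactly what makes this legal). If instead $S=(\rho|_{h(\Gamma_0)})^{-1}(T)$ is a generalized $F$-set, with $\rho\colon H\lra H'$ a surjection of semiabelian varieties over a finite field and $T$ a groupless $F$-set in $\rho(h(\Gamma_0))$, then $(h|_{\Gamma_0})^{-1}(S)=((\rho\circ h)|_{\Gamma_0})^{-1}(T)$; since $\rho\circ h\colon G_0\lra H'$ is again a surjection onto a semiabelian variety over a finite field and $T$ is a groupless $F$-set in $(\rho\circ h)(\Gamma_0)$, the set $x_0+((\rho\circ h)|_{\Gamma_0})^{-1}(T)$ is again a pseudo-generalized $F$-set. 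Taking the finite union over all pieces settles the main case, and the induction then finishes the theorem.

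The genuine mathematical content is already packaged in Theorem~\ref{thm:H} and Theorem~\ref{thm:main}; accordingly, I expect the main obstacle to be structural rather than analytic, namely verifying that the class of pseudo-generalized $F$-sets is stable under pullback along the composite surjection $\rho\circ h$ and under translation by $x_0\in\Gamma$. This is precisely why Definition~\ref{def:F2} is framed with an arbitrary surjection $\pi$ (rather than insisting $\dim\ker\pi>0$, as in Definition~\ref{def:F1}) and with a built-in base point $x_0\in\Gamma$: these are exactly the two features needed for the class to close up under the reduction, and getting the bookkeeping of the composition $\rho\circ h$ and the recentering right is the delicate point.
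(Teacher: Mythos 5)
Your proposal is correct and follows essentially the same route as the paper: reduce to the case where $X$ is irreducible with $X(K)\cap\Gamma$ Zariski dense, apply Theorem~\ref{thm:H}, recenter at a point $x_0\in X(K)\cap\Gamma$ so that $X(K)\cap\Gamma=x_0+(h|_{\Gamma_0})^{-1}\bigl((X_0-\beta)(K)\cap h(\Gamma_0)\bigr)$, and then invoke Theorem~\ref{thm:main} over the finite-field quotient $H$. Your explicit verification that pulling back a generalized $F$-set along $h$ yields a pseudo-generalized $F$-set via the composite surjection $\rho\circ h$ is exactly the point the paper handles more tersely by citing Remark~\ref{rem:less}.
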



\subsection{Plan for our paper}

In Section~\ref{sec:examples} we introduce three examples which progressively show the complexity of the sets appearing as intersections between a subvariety of a semiabelian variety $G$ with a finitely generated group. Even though in our examples, $G$ is defined over a finite field, each such example can be ``embedded'' as isotrivial semiabelian subvarieties of a semiabelian variety defined over an arbitrary field of positive characteristic, thus  providing complex examples of pseudo-generalized $F$-sets. In Section~\ref{sec:proofs} we prove Theorems~\ref{thm:main}~and~\ref{thm:main_2}. Also, we prove Theorem~\ref{thm:3} as a consequence of two more precise results (see Propositions~\ref{prop:curve}~and~\ref{prop:simple}) regarding the structure of the intersection $X(K)\cap \Gamma$ when either $X$ is a curve, or $G$ is a simple semiabelian variety.


\section{Examples}
\label{sec:examples}

Our first example already shows that $X(K)\cap\Gamma$ is not always an $F$-set in $\Gamma$ (when $\Gamma$ is not invariant under the Frobenius endomorphism of $G$).
\begin{ex}
\label{ex:1}
We let $G=\bG_m^2\times E$, where $E$ is a supersingular elliptic curve defined over $\Fp$; for example, we can take $E$ be the elliptic curve given by the  equation in affine coordinates $y^2=x^3+1$ when $p=5$, in which case, we have that the square $F^2$ of the usual Frobenius endomorphism of $E$ corresponding to $\mathbb{F}_5$ equals the multiplication map $[-5]$ on $E$. We let $C\subset \bG_m^2$ be the line given by the equation $x_2=x_1+1$ and then let $X=C\times E$. We let $K=\overline{\Fp(t)}$ and let $P\in E(K)$ be a nontorsion point. Finally, we let $\Gamma\subset G(K)$ be the cyclic group spanned by $Q:=(t,t+1,P)\subset G(K)$. Then 
\begin{equation}
\label{eq:ex-1}
X(K)\cap\Gamma=\left\{ p^nQ\colon n\ge 0\right\}.
\end{equation}
Furthermore, the set from~\eqref{eq:ex-1} cannot be expressed as a groupless $F$-set; the closest it comes to being an $F$-set is expressing it as the following slight twist of groupless $F$-sets. We let $Q_1:=(t,t+1,0)\in G(K)$ and $Q_2:=(1,1,P)\in G(K)$ and then the set from~\eqref{eq:ex-1} is the union of the following two sets:
\begin{equation}
\label{eq:ex-12}
\left\{F^{2n}(Q_1)+F^{4n}(Q_2)\colon n\ge 0\right\}\text{ and }\left\{F^{2n+1}(Q_1) - F^{4n+2}(Q_2)\colon n\ge 0\right\}.
\end{equation}
\end{ex}

Now, comparing the sets from~\eqref{eq:ex-12} with the actual (groupless) $F$-sets, the difference seems quite small and so, one might think that perhaps slightly extending the definition of $F$-sets as in equation~\eqref{eq:ex-12} would be enough. The main issue in Example~\ref{ex:1} comes from the fact that the Frobenius endomorphism has ``different weights'' on the abelian, respectively affine part of $G$; so, it might be reasonable for one to think that  allowing different weights also in the definition of a groupless $F$-set by considering sets of the form:
$$\left\{\sum_{i=1}^r\sum_{j=1}^s F^{k_{i,j}\cdot n_j}(\alpha_j)\colon n_j\ge 0\text{ for }j=1,\dots, s\right\}$$
would suffice for describing $X(K)\cap\Gamma$.  
However, the next example shows that no simple extension of the definition of $F$-sets would work.

\begin{ex}
\label{ex:2}
We still work with $G=\bG_m^2\times E$, but this time the elliptic curve $E$ is ordinary; for example, we could take $p=5$ and let $E$ be the elliptic curve given by the equation in affine coordinates $y^2=x^3+x$. One can check that the  Frobenius endomorphism corresponding to $\F_5$  satisfies the integral equation $F^2-2F+5=0$ on $E$. We let as before $K=\overline{\Fp(t)}$ and we work with the cyclic group $\Gamma$  spanned by $Q:=(t,t+1,P)\in G(K)$ for some non-torsion point $P\in E(K)$. Then letting $X=C\times E$, where $C\subset \bG_m^2$ is the line $x_2=x_1+1$, we get that 
\begin{equation}
\label{eq:ex-2}
X(K)\cap\Gamma=\left\{p^nQ\colon n\ge 0\right\}.
\end{equation} 
However, one can show that the set from~\eqref{eq:ex-2} \emph{cannot} be split into finitely many sets of the form:
\begin{equation}
\label{eq:ex-22}
\left\{\sum_{i=1}^r\sum_{j=1}^s F^{k_{i,j}n_j}(Q_j)\colon n_j\ge 0\text{ for }j=1,\dots, s\right\},
\end{equation}
for any given $r,s\in\mathbb{N}$ and any choice of non-negative integers $k_{i,j}$ and any choice of given points $Q_j\in G(K)$. In other words, even the most complex definition of a groupless $F$-set as in equation~\eqref{eq:ex-22} would still not cover a possible intersection $X(K)\cap\Gamma$. 
\end{ex}

Now, Examples~\ref{ex:1}~and~\ref{ex:2} may still suggest that the intersection $X(K)\cap\Gamma$ could be expressed using more general (groupless) $F$-sets in which one would allow also the multiplication-by-$p$ map on $G$ playing a similar role as the Frobenius endomorphism. However, the next example shows that $X(K)\cap\Gamma$ may have a very complex structure.

\begin{ex}
\label{ex:3}
We let $A$ and $B$ be semiabelian varieties defined over a finite subfield $\Fq$ of an algebraically closed field $K$, let $G=A\times B$, and let $F$ be the corresponding Frobenius endomorphism associated to $\Fq$. We let $h$ be the minimal (monic) polynomial with integer coefficients for which $h(F)=0$ on $B$. Depending on the abelian part of the semiabelian variety $B$, the degree $m$ of the polynomial $h$ may be arbitrarily large.

We let $C\subset B$ be a curve defined over  $\Fq$ with trivial stabilizer in $B$ and let $P\in C(K)$ be a non-torsion point; one can even choose $C$ and $P$ so that $C(K)$ intersects the cyclic $\mathbb{Z}[F]$-module $\Gamma_1$ spanned by $P$ precisely in the orbit of $P$ under the Frobenius endomorphism $F$. We also let $Q_1,\dots, Q_m\in A(K)$ be linearly independent points (note that $A(K)\otimes_{\mathbb{Z}}\mathbb{Q}$ is an infinite dimensional $\mathbb{Q}$-vector space). Then we consider $X:=A\times C$ and also, consider the group $\Gamma\subset G(K)$ spanned by the points: 
$$R_1:=(Q_1,P)\text{; }R_2:=(Q_2,F(P))\text{; }R_3:=\left(Q_3,F^2(P)\right)\text{; }\cdots\text{; }R_m:=\left(Q_m,F^{m-1}(P)\right).$$
Then letting $\pi_2:G\lra B$ be the projection of $G=A\times B$ on the second coordinate, we have that $\pi_2(\Gamma)=\Gamma_1$ because $\Gamma_1$ is spanned by the points 
$$P\text{, }F(P)\text{, }F^2(P),\cdots, F^{m-1}(P)\in B(K)$$
since $\Gamma_1$ is the cyclic $\mathbb{Z}[F]$-module spanned by $P$ and $h(F)(P)=0$.    
So, we  can find $m$  sequences $\left\{a^{(i)}_{n}\right\}_{n\ge 0}$ of integers (for $i=0,\dots, m-1$) such that for any $n\ge 0$, we have that
\begin{equation}
\label{eq:ex-3}
F^n(P)=\sum_{i=0}^{m-1}a^{(i)}_n\cdot F^i(P).
\end{equation}
Equation~\eqref{eq:ex-3} yields that $X(K)\cap\Gamma$ is the set:
\begin{equation}
\label{eq:ex-32}
\left\{\sum_{i=1}^{m} a^{(i-1)}_n\cdot R_i\colon n\ge 0\right\}.
\end{equation}
So, due to the potential complexity of the coefficients of the polynomial $h$ satisfied by the Frobenius endomorphism (on the semiabelian variety $B$), the  sequences $\left\{a^{(i)}_n\right\}_{n\ge 0}$ may be quite complicated. 
\end{ex}


\section{Proofs of our main results}
\label{sec:proofs}

\begin{proof}[Proof of Theorem~\ref{thm:main}.]
We proceed by induction on $\dim(X)$; the case when $\dim(X)=0$ is obvious since then $X(K)\cap\Gamma$ is a finite set and so, each of the groupless $F$-sets from our intersection are singletons (corresponding to $r=0$ in equation~\eqref{eq:F-orbits}).

Clearly, it suffices to assume $X$ is irreducible. Also, we may assume $X(K)\cap\Gamma$ is Zariski dense in $X$ since otherwise we could replace $X$ by the Zariski closure of $X(K)\cap\Gamma$ and be done by the inductive hypothesis.

We let $U:={\rm Stab}_G(X)$ be the stabilizer of $X$ in $G$. We have two possibilities depending on whether $U$ is finite, or not.

{\bf Case 1.} $\dim(U)>0$.

In this case, we let $\pi_0:G\lra G/U$ be the natural group homomorphism; in particular, $G_0:=G/U$ is a semiabelian variety defined over a finite field since $U$ is defined over a finite extension of $\Fq$. We let $\Gamma_0:=\pi_0(\Gamma)$ and also, let $X_0:=\pi_0(X)$. 

Since $\dim(U)>0$, then $\dim(X_0)<\dim(X)$ and so, by the inductive hypothesis, we have that $X_0(K)\cap \Gamma_0$ is a union of finitely many groupless $F$-sets $B_i$ in $\Gamma_0$ along with finitely many generalized $F$-sets $C_i$ in $\Gamma_0$. We have that
\begin{equation}
\label{eq:in}
X(K)\cap \Gamma=\pi_0^{-1}\left(X_0(K)\cap\Gamma_0\right)\cap \Gamma = \left(\pi_0|_{\Gamma}\right)^{-1}\left(X_0(K)\cap\Gamma_0\right).
\end{equation}   
Clearly, each $(\pi_0|_{\Gamma})^{-1}(B_i)$ is a generalized $F$-set in $\Gamma$ as in Definition~\ref{def:F1}. Now, each $C_i$ is a set of the form 
$$\left(f|_{\Gamma_0}\right)^{-1}(S_0)=f^{-1}(S_0)\cap\Gamma_0,$$
where $f:G_0\lra H$ is a surjective group homomorphism of semiabelian varieties over $K$ in which $\dim(\ker(f))>0$ and $H$ is defined over a finite extension of $\Fq$, and $S_0$ is a groupless $F$-set in $f(\Gamma_0)\subset H(K)$ as in Definition~\ref{def:F0}. 
So, using \eqref{eq:in}, along with the fact that  
\begin{equation}
\label{eq:inclusion}
\pi_0^{-1}\left(f^{-1}(S_0)\cap \Gamma_0\right)\cap \Gamma=\left(f\circ \pi_0\right)^{-1}(S_0)\cap \Gamma,
\end{equation}
then we obtain that $X(K)\cap \Gamma$ has the desired form as in the conclusion of Theorem~\ref{thm:main}. 

{\bf Case 2.} $U$ is finite.

In this case, we let $\tilde{\Gamma}$ be the $\mathbb{Z}[F]$-submodule spanned by $\Gamma$ inside $G(K)$; since $F$ is integral over $\mathbb{Z}$ (inside ${\rm End}(G)$), then  $\tilde{\Gamma}$ is also a finitely generated subgroup of $G(K)$. According to \cite{F-sets} (see Theorem~\ref{thm:R-T}), we have that 
\begin{equation}
\label{eq:in-3}
X(K)\cap\tilde{\Gamma}=\bigcup_{i=1}^\ell \left(S_i+\Gamma_i\right), 
\end{equation}
where each $S_i\subset \tilde{\Gamma}$ is a groupless $F$-set as in Definition~\ref{def:F0}, while each $\Gamma_i$ is a subgroup of $\tilde{\Gamma}$. Now, since 
\begin{equation}
\label{eq:in-2}
X(K)\cap\Gamma=\left(X(K)\cap\tilde{\Gamma}\right)\cap \Gamma,
\end{equation}
it suffices to prove that for each $i=1,\dots, \ell$, there exists a subset $A_i\subseteq X(K)\cap\Gamma$ which is a union of finitely many groupless $F$-sets in $\Gamma$ along with finitely many generalized $F$-sets in $\Gamma$ such that 
\begin{equation}
\label{eq:inside}
(S_i+\Gamma_i)\cap\Gamma\subseteq A_i;
\end{equation}
then combining equations~\eqref{eq:in-3},~\eqref{eq:in-2}~and~\eqref{eq:inside},  we would get that 
$$X(K)\cap\Gamma = \bigcup_{i=1}^\ell \left(S_i+\Gamma_i\right)\cap\Gamma = \bigcup_{i=1}^\ell A_i$$
is indeed a finite union of groupless $F$-sets in $\Gamma$ along with finitely many generalized $F$-sets in $\Gamma$, as claimed in the conclusion of Theorem~\ref{thm:main}. 

In order to prove the existence of a set $A_i$ (for each $i=1,\dots, \ell$) as in equation~\eqref{eq:inside}, we deal with two additional cases.

{\bf Case 2a.} $\Gamma_i$ is an infinite subgroup.

In this case, we let $X_i$ be the Zariski closure of $S_i+\Gamma_i$; clearly, $X_i\subseteq X$. We claim that $X_i$ is a proper subvariety of $X$. Indeed, by construction, $\Gamma_i\subseteq {\rm Stab}_G(X_i)$ and since $\Gamma_i$ is infinite, then we cannot have that $X_i=X$ because ${\rm Stab}_G(X)$ is finite. So, $\dim(X_i)<\dim(X)$ and by our inductive hypothesis, we have that that $A_i:=X_i(K)\cap \Gamma$ satisfies the conclusion from Theorem~\ref{thm:main}; therefore
$$\left(S_i+\Gamma_i\right)\cap\Gamma\subseteq A_i,$$
where $A_i$ is a union of finitely many groupless $F$-sets along with finitely many generalized $F$-sets, as desired for \eqref{eq:inside}.

{\bf Case 2b.} $\Gamma_i$ is finite.

In this case, letting $s:=\#\Gamma_i$, we have that $S_i+\Gamma_i$ is a union of $s$ groupless $F$-sets as in Definition~\ref{def:F0}. Now, \cite[Theorem~3.1]{TAMS} shows that the intersection of a groupless $F$-set with a finitely generated group is a itself a finite union of groupless $F$-sets; so, 
$$A_i:=(S_i+\Gamma_i)\cap\Gamma$$
is a finite union of groupless $F$-sets in $\Gamma$ as desired for \eqref{eq:inside}.

This concludes our proof of Theorem~\ref{thm:main}.  
\end{proof}

Theorem~\ref{thm:3} is an immediate corollary of our next two results which provide a more precise form of the intersection between a subvariety $X$ of $G$ with a finitely generated subgroup of $G(K)$ when $X$ is a curve, respectively when $G$ is a simple semiabelian variety.

\begin{proposition}
\label{prop:curve}
Let $G$ be a semiabelian variety defined over a finite subfield of an algebraically closed field $K$, let $\Gamma\subset G(K)$ be a finitely generated subgroup, and let $X\subseteq G$ be an irreducible curve. 
\begin{itemize}
\item[(i)] If $\dim\left({\rm Stab}_G(X)\right)>0$, then $X(K)\cap\Gamma$ is a coset of a subgroup of $\Gamma$. 
\item[(ii)] If ${\rm Stab}_G(X)$ is finite, then $X(K)\cap\Gamma$ is a finite union of groupless $F$-sets.
\end{itemize}
\end{proposition}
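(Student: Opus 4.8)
The plan is to prove the two parts of Proposition~\ref{prop:curve} separately, exploiting the fact that $X$ is a curve so that any proper subvariety is a finite set of points. For part (i), when $\dim({\rm Stab}_G(X))>0$, note that since $X$ is an irreducible curve and ${\rm Stab}_G(X)$ is a positive-dimensional group fixing $X$, the stabilizer must act transitively on $X$ (its orbit is closed, positive-dimensional, and contained in the one-dimensional irreducible $X$); hence $X$ is itself a coset of the $1$-dimensional semiabelian subvariety $G_1 := ({\rm Stab}_G(X))^\circ$. Writing $X = x_0 + G_1$ for some $x_0 \in X(K)$, the intersection $X(K)\cap\Gamma$ is either empty or, fixing one point $\gamma_0 \in X(K)\cap\Gamma$, equals $\gamma_0 + (G_1(K)\cap\Gamma)$. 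Since $G_1(K)\cap\Gamma$ is a subgroup of $\Gamma$, this exhibits $X(K)\cap\Gamma$ as a coset of a subgroup of $\Gamma$, as desired.

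For part (ii), when ${\rm Stab}_G(X)$ is finite, the strategy is to invoke Theorem~\ref{thm:main}, which already gives that $X(K)\cap\Gamma$ is a finite union of groupless $F$-sets in $\Gamma$ together with finitely many generalized $F$-sets in $\Gamma$. The crux is therefore to rule out the generalized $F$-sets, i.e.\ to show that under the finiteness of the stabilizer and the curve hypothesis, no genuine generalized $F$-set can appear. Tracing through the proof of Theorem~\ref{thm:main}, a generalized $F$-set is produced only in Case~1 (via the quotient by a positive-dimensional stabilizer) or in Case~2a (where an infinite subgroup $\Gamma_i$ forces a proper subvariety $X_i$ and the inductive hypothesis is applied). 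Since here ${\rm Stab}_G(X)$ is finite, Case~1 does not occur for $X$ itself; and in Case~2a the proper subvariety $X_i\subsetneq X$ has dimension strictly less than $\dim(X)=1$, so $X_i$ is zero-dimensional and $X_i(K)\cap\Gamma$ is a finite set, which is a finite union of singleton groupless $F$-sets (the $r=0$ case in~\eqref{eq:F-orbits}). Thus the inductive step never actually generates a positive-dimensional kernel, so no generalized $F$-set can arise.

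The cleanest way to organize this is to redo the dichotomy of the proof of Theorem~\ref{thm:main} in the curve setting. Passing to $\tilde\Gamma$, the $\Z[F]$-module spanned by $\Gamma$, Theorem~\ref{thm:R-T} writes $X(K)\cap\tilde\Gamma = \bigcup_i (S_i+\Gamma_i)$ with each $S_i$ a groupless $F$-set and each $\Gamma_i$ a subgroup of $\tilde\Gamma$. For each $i$ with $\Gamma_i$ infinite, the Zariski closure of $S_i+\Gamma_i$ is a positive-dimensional subvariety of the curve $X$ stabilized by the infinite $\Gamma_i$; since ${\rm Stab}_G(X)$ is finite this closure cannot be all of $X$, forcing it to be $X$ itself only if it were $0$-dimensional---a contradiction---so in fact the infinite-$\Gamma_i$ case cannot produce a proper positive-dimensional closure, and any such contribution to $X(K)\cap\Gamma$ is a finite set. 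For each $i$ with $\Gamma_i$ finite, $S_i+\Gamma_i$ is a finite union of groupless $F$-sets, and by \cite[Theorem~3.1]{TAMS} its intersection with $\Gamma$ is again a finite union of groupless $F$-sets in $\Gamma$. Taking the union over $i$ and intersecting with $\Gamma$ (using $X(K)\cap\Gamma = (X(K)\cap\tilde\Gamma)\cap\Gamma$) yields the claim.

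The main obstacle I anticipate is the bookkeeping in part (ii): one must be careful that the reduction to the groupless case really closes, i.e.\ that the only mechanism by which a generalized $F$-set enters Theorem~\ref{thm:main} is through a positive-dimensional stabilizer or an infinite $\Gamma_i$, both of which are excluded (or rendered harmless, yielding only finitely many points) by the simultaneous hypotheses $\dim(X)=1$ and ${\rm Stab}_G(X)$ finite. A secondary technical point in part (i) is verifying that a positive-dimensional stabilizer of an irreducible curve forces the curve to be a full coset of the connected component of the stabilizer; this follows from the fact that the stabilizer's orbit through any point of $X$ is a positive-dimensional closed subset of the irreducible $1$-dimensional variety $X$, hence equals $X$.
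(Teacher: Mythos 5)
Your proposal is correct and follows essentially the same route as the paper: part (i) by identifying $X$ as a coset of the one-dimensional connected component of its stabilizer, and part (ii) by passing to the $\mathbb{Z}[F]$-module $\tilde{\Gamma}$, applying Theorem~\ref{thm:R-T}, using the finiteness of ${\rm Stab}_G(X)$ together with $\dim(X)=1$ to rule out infinite group parts $\Gamma_i$ (so only groupless $F$-sets survive), and finishing with \cite[Theorem~3.1]{TAMS}. The only blemish is the garbled sentence in your ``cleanest way'' paragraph: the correct conclusion there is that an infinite $\Gamma_i$ cannot occur at all (its positive-dimensional Zariski closure inside the irreducible curve $X$ would force that closure to equal $X$, making ${\rm Stab}_G(X)$ infinite), rather than that such a case ``contributes a finite set''---but this does not affect the validity of the argument.
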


\begin{proof}
The proof of part~(i) is immediate since then $X=\gamma+G_1$, for some point $\gamma\in G(K)$ and some $1$-dimensional connected algebraic subgroup $G_1\subseteq G$. So, then the intersection $X(K)\cap\Gamma$ is simply a coset of the subgroup $G_1(K)\cap\Gamma$ of $\Gamma$.

Now, we assume ${\rm Stab}_G(X)$ is finite. Then we let $\tilde{\Gamma}$ be the $\mathbb{Z}[F]$-submodule of $G(K)$ spanned by $\Gamma$; by Theorem~\ref{thm:R-T}, we have that $\tilde{\Gamma}$ intersects $X(K)$ in a finite union of $F$-sets $S_i$ in $\tilde{\Gamma}$. But then at the expense of replacing each $S_i$ with finitely many other $F$-sets, we may assume that each such $F$-set is groupless  (see also the proof of {\bf Case 2b} in  Theorem~\ref{thm:main}). Finally, another application of \cite[Theorem~3.1]{TAMS} yields that each $S_i\cap\Gamma$ is a finite union of groupless $F$-sets in $\Gamma$, as desired. 
\end{proof}

\begin{proposition}
\label{prop:simple}
Let $G$ be a simple semiabelian variety defined over a finite subfield of an algebraically closed field $K$, let $\Gamma\subset G(K)$ be a finitely generated group, and let $X\subset G$ be a proper subvariety. Then $X(K)\cap\Gamma$ is a finite union of groupless $F$-sets in $\Gamma$. 
\end{proposition}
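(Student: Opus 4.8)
The plan is to establish Proposition~\ref{prop:simple} by reducing to the analysis already packaged in Theorem~\ref{thm:R-T} together with \cite[Theorem~3.1]{TAMS}, and then to exploit the simplicity of $G$ to rule out the appearance of genuine (non-groupless) $F$-sets. First I would pass, exactly as in \textbf{Case 2} of the proof of Theorem~\ref{thm:main}, to the $\mathbb{Z}[F]$-submodule $\tilde{\Gamma}\subseteq G(K)$ spanned by $\Gamma$; since $F$ is integral over $\mathbb{Z}$ inside ${\rm End}(G)$, the group $\tilde{\Gamma}$ is still finitely generated, and Theorem~\ref{thm:R-T} applies to give
\begin{equation*}
X(K)\cap\tilde{\Gamma}=\bigcup_{i=1}^\ell\left(S_i+\Gamma_i\right),
\end{equation*}
with each $S_i$ a groupless $F$-set in $\tilde{\Gamma}$ and each $\Gamma_i$ a subgroup of $\tilde{\Gamma}$. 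Because $X(K)\cap\Gamma=\left(X(K)\cap\tilde{\Gamma}\right)\cap\Gamma$, it suffices to control $\left(S_i+\Gamma_i\right)\cap\Gamma$ for each $i$.

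The key structural input is that $G$ is simple: its only connected algebraic subgroups are $\{0\}$ and $G$ itself. I would use this to show each relevant $\Gamma_i$ is either finite or else forces a degenerate situation. As in \textbf{Case 2a}, the Zariski closure $X_i$ of $S_i+\Gamma_i$ has $\Gamma_i$ inside its stabilizer, so the connected component ${\rm Stab}_G(X_i)^\circ$ is a connected subgroup of $G$ containing the Zariski closure of $\Gamma_i$. Simplicity then pins this connected component down to be either trivial or all of $G$. In the former case $\Gamma_i$ is finite (its closure being a finite group scheme), and in the latter case $X_i$ is stabilized by all of $G$, forcing $X_i=G$; but $X$ is a \emph{proper} subvariety, so $X_i\subseteq X\subsetneq G$ rules this out. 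Hence every $\Gamma_i$ is finite. With $\Gamma_i$ finite, $S_i+\Gamma_i$ is a finite union of groupless $F$-sets in $\tilde{\Gamma}$ (translating a groupless $F$-set by each of the finitely many elements of $\Gamma_i$, and noting a translate of a groupless $F$-set is again groupless, absorbing the translation into $\alpha_0$).

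Finally I would intersect with $\Gamma$. Since each $S_i+\Gamma_i$ is now a finite union of groupless $F$-sets in $\tilde{\Gamma}$, and \cite[Theorem~3.1]{TAMS} guarantees that the intersection of a groupless $F$-set with a finitely generated subgroup is again a finite union of groupless $F$-sets in that subgroup, each $\left(S_i+\Gamma_i\right)\cap\Gamma$ is a finite union of groupless $F$-sets in $\Gamma$. Taking the union over $i=1,\dots,\ell$ yields that $X(K)\cap\Gamma$ is a finite union of groupless $F$-sets in $\Gamma$, as claimed. The main obstacle I anticipate is the second step: carefully verifying that simplicity of $G$ genuinely forecloses the infinite-$\Gamma_i$ scenario, which is precisely the mechanism that produced the wild generalized $F$-sets in Example~\ref{ex:3} for non-simple $G$. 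Concretely, I must check that the stabilizer argument does not leak through non-reduced or disconnected subgroup schemes, and that ``$\Gamma_i$ infinite'' really would force a positive-dimensional connected subgroup of $G$ stabilizing a proper subvariety — the contradiction with simplicity being what makes the simple case so much cleaner than the general Theorem~\ref{thm:main}.
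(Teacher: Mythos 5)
Your proof is correct, but it takes a genuinely different route from the paper's. The paper deduces Proposition~\ref{prop:simple} directly from Theorem~\ref{thm:main}: after disposing of the trivial case where $\Gamma$ is finite, it applies Theorem~\ref{thm:main} to write $X(K)\cap\Gamma$ as a union of groupless and generalized $F$-sets, and then kills the generalized ones by observing that any surjection $\pi:G\lra H$ with $\dim(\ker(\pi))>0$ must, by simplicity, have $\ker(\pi)=G$; the corresponding generalized $F$-set would then be all of $\Gamma$, whose Zariski closure (using that $\Gamma$ is infinite and $G$ is simple) is $G$ itself, contradicting $X\subsetneq G$. You instead bypass Theorem~\ref{thm:main} entirely and rerun the machinery underneath it: Moosa--Scanlon (Theorem~\ref{thm:R-T}) applied to $\tilde{\Gamma}$, then a stabilizer argument showing each group part $\Gamma_i$ is finite, then \cite[Theorem~3.1]{TAMS} to descend from $\tilde{\Gamma}$ to $\Gamma$. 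This is essentially the paper's proof of Proposition~\ref{prop:curve}(ii) merged with Case~2 of Theorem~\ref{thm:main}, with simplicity of $G$ plus properness of $X$ standing in for the finite-stabilizer hypothesis; your key observation --- that the Zariski closure $X_i$ of $S_i+\Gamma_i$ lies in the proper subvariety $X$, so ${\rm Stab}_G(X_i)^\circ$ cannot be $G$ and hence is trivial --- is exactly what makes the infinite-$\Gamma_i$ case (the source of generalized $F$-sets, as in Example~\ref{ex:3}) impossible here. What each approach buys: yours is self-contained modulo the two external inputs and avoids both the induction of Theorem~\ref{thm:main} and the finite/infinite case split on $\Gamma$; the paper's is shorter given that Theorem~\ref{thm:main} is already proved, and makes transparent that simplicity forecloses generalized $F$-sets at the level of their very definition. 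One small imprecision to fix in your write-up: ${\rm Stab}_G(X_i)^\circ$ need not contain the Zariski closure of $\Gamma_i$ (that closure may be disconnected), but this is harmless --- your actual dichotomy only needs that ${\rm Stab}_G(X_i)^\circ$ is trivial or all of $G$, and in the trivial case the full stabilizer is a finite group scheme, so $\Gamma_i\subseteq {\rm Stab}_G(X_i)(K)$ is finite, exactly as you conclude.
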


\begin{proof}
First of all, we note that if $\Gamma$ is a finite group, then clearly $X(K)\cap\Gamma$ is a finite set and thus a finite union of groupless $F$-sets, as desired.

So, from now on, we assume that $\Gamma$ is infinite. According to our Theorem~\ref{thm:main}, we know that $X(K)\cap\Gamma$ is a finite union of groupless $F$-sets in $\Gamma$ along with (possibly) finitely many generalized $F$-sets in $\Gamma$. Now, for any such generalized $F$-set in $\Gamma$ (call it $S$), we have that 
$$S=\left(\pi|_\Gamma\right)^{-1}(S_0),$$
where $\pi:G\lra H$ is a surjective group homomorphism of semiabelian varieties defined over a finite subfield of $K$, $S_0$ is a groupless $F$-set in $\pi(\Gamma)\subset H(K)$ and moreover, $\dim(\ker(\pi))>0$. But since $G$ is a simple semiabelian variety, this means that $\ker(\pi)=G$, i.e., $H$ is the trivial group variety and so, $S$ would have to be the entire subgroup $\Gamma$. But then its Zariski closure in $G$ is an infinite algebraic subgroup of $G$ (note that $\Gamma$ is assumed now to be infinite) and so, once again because $G$ is simple, we would conclude that $\Gamma$ is Zariski dense in $G$. But then because $S=\Gamma$ is contained in $X$, we would have that $X=G$, contradicting the fact that $X$ is a proper subvariety of $G$. Therefore, we have no generalized $F$-sets in $\Gamma$ contained in the intersection $X(K)\cap\Gamma$.

This concludes our proof for Proposition~\ref{prop:simple}.
\end{proof}

Theorem~\ref{thm:main_2} follows easily from our Theorem~\ref{thm:main} combined with Theorem~\ref{thm:H}.

\begin{proof}[Proof of Theorem~\ref{thm:main_2}.]
Clearly, as argued in the proof of Theorem~\ref{thm:main}, it suffices to prove Theorem~\ref{thm:main_2} assuming that $X$ is an irreducible subvariety of $G$ and $X(K)\cap\Gamma$ is Zariski dense in $X$. Then Theorem~\ref{thm:H} yields that
$$X=\gamma+\pi^{-1}(X_0),$$
where $\pi:G_0\lra H$ is a surjective group homomorphism of semiabelian varieties, while $G_0$ is a semiabelian subvariety of $G$ and $\gamma\in G(K)$; moreover, $H$ and the subvariety $X_0\subseteq H$ are defined over over a finite subfield $\F_q\subset K$. Then for $x\in G(K)$, we have $x\in X(K)$ if and only if ``$x-\gamma\in G_0(K)$ and $\pi(x-\gamma)\in X_0(K)$". We denote $\Gamma_0=G_0(K)\cap\Gamma$.

Pick $x_0\in X(K)\cap\Gamma$. Let $g_0=x_0-\gamma\in G_0(K)$. We have $x_0+\Gamma_0=(\gamma+G_0(K))\cap\Gamma$. As a result, for any $x\in\Gamma$, we have $x-\gamma\in G_0(K)$ if and only if there exists $\gamma_0\in\Gamma_0$ such that $x=x_0+\gamma_0$. Thus $x-\gamma=g_0+\gamma_0$ and so, $\pi(x-\gamma)\in X_0(K)$ yields $\pi(\gamma_0)\in-\pi(g_0)+X_0(K)$.

Let $X_0'=-\pi(g_0)+X_0$ which is a subvariety of $H$. The discussion above implies that $X(K)\cap\Gamma=x_0+(\pi|_{\Gamma_0})^{-1}(X_0'(K)\cap\pi(\Gamma_0))$. So, considering the subvariety $X_0'\subseteq H$, along with the finitely generated subgroup $\pi(\Gamma_0)$ of $H(K)$, then we apply Theorem~\ref{thm:main} to conclude that the intersection $X_0'(K)\cap \pi(\Gamma_0)$ is a finite union of generalized $F$-sets in $\pi(\Gamma_0)$ along with finitely many groupless $F$-sets in $\pi(\Gamma_0)$. But whether $S$ is a generalized $F$-set in $\pi(\Gamma_0)$ or a groupless $F$-set in $\pi(\Gamma_0)$, $x_0+(\pi|_{\Gamma_0})^{-1}(S)$ will always be a pseudo-generalized $F$-set in $\Gamma$ (see also Remark~\ref{rem:less}). This shows that $X(K)\cap\Gamma$ is a finite union of pseudo-generalized $F$-sets in $\Gamma$, as desired.
\end{proof}


\textbf{Acknowledgement.} The second author is very grateful to his advisor Junyi Xie who introduced this topic to him.

The first author is supported by a Discovery NSERC grant, while the second author is supported by an NSFC Grant (No. 12271007).


\end{document}